\documentclass{amsart}

\usepackage{amssymb, amscd, amsmath}
\usepackage[all]{xy}
\usepackage[obeyspaces]{url}

\newtheorem{theorem}{Theorem}[section]
\newtheorem{proposition}[theorem]{Proposition}
\newtheorem{corollary}[theorem]{Corollary}
\newtheorem{lemma}[theorem]{Lemma}

\theoremstyle{remark}
\newtheorem{remark}[theorem]{\bf Remark}
\newtheorem{algorithm}[theorem]{\bf Algorithm}

\newcommand{\cO}{\mathcal{O}}

\newcommand{\fD}{\mathfrak{D}}

\newcommand{\ii}{\textbf{i}}

\newcommand{\ff}{\widetilde{\varphi}}
\newcommand{\fp}{\widetilde{\psi}}

\newcommand{\mQ}{\mathbb{Q}_p}

\newcommand{\typezero}{\langle 0 \rangle}
\newcommand{\typelambda}{\langle \lambda \rangle}



\makeatletter
    
   \@addtoreset{table}{section}
  \makeatother

\title[\tiny An identification for Eisenstein polynomials over a $p$-adic field]
{An identification for Eisenstein polynomials over a $p$-adic field}

\author{Shun'ichi Yokoyama and Manabu Yoshida}
\address{
	Graduate School of Mathematics, Kyushu University,
	Fukuoka 819-0395, Japan
}
\email{s-yokoyama@math.kyushu-u.ac.jp\\m-yoshida@math.kyushu-u.ac.jp}
\thanks{The authors are supported by the Japan Society for the Promotion 
of Scientist Fellowships for Young Scientists.}
\date{}
\subjclass[2010]{Primary: 11S05 Secondly: 11S15}

\begin{document}

\maketitle



In this note, we give a criteria whether given two Eisenstein polynomials over a 
$p$-adic field define the same extension (Proposition \ref{Krasner}). 
In particular, we completely identify Eisenstein polynomials of degree $p$ (Theorem \ref{MainTheorem}). 
This note is an English translation of a part of \cite{Yokoyama12}. 

\section{Eisenstein polynomials and ramification theory}\label{Chap:Ramification}

In Section \ref{Chap:General}, we consider general Eisenstein polynomials. 
In Section \ref{Chap:DegreeP}, we precisely investigate Eisenstein polynomials of degree $p$ over $\mathbb{Q}_p$. 

\subsection{General Eisenstein polynomials}\label{Chap:General}

In this subsection, we assume $K$ is a finite extension of $\mathbb{Q}_p$\footnote{
The results in Subsection \ref{Chap:General} hold for any complete discrete valuation field $K$ with perfect residue field of characteristic $p$ 
and any finite separable extension $L/K$. 
}. 
We fix an algebraic closure $\overline{K}$ of $K$ and 
we assume throughout that all algebraic extensions of $K$ under discussion 
are contained in $\overline{K}$. 
We denote by $\mathcal{O}_K$ the valuation ring of $K$ and by $v_K$ the valuation on $\overline{K}$ such that $v_K(K^{\times})=\mathbb{Z}$.  
Let $L$ be a finite separable extension of $K$.  
We denote by $\mathcal{O}_L$ the integral closure of $\mathcal{O}_K$ in $L$. 
There exists an element $\alpha \in \cO_L$ such that $\cO_L = \cO_K[\alpha]$ 
(the existence of such an element is proved in 
\cite{Serre79}, Chap.\ III, Sect.\ 6, Prop.\ 12). 
Put $H=\mathrm{Hom}_K(L,\overline{K})$.  
The order function $\ii_{L/K}$ is defined on $H$ by 
\[\ii_{L/K}(\sigma) = v_K(\sigma(\alpha) - \alpha)\]
for any $\sigma \in H$. 
This function is independent of the choice of $\alpha$. 
The $i$th \emph{lower numbering ramification set $H_{(i)}$ of $H$} 
are defined for a real number $i \geq 0$ by 
\[H_{(i)}=\{ \sigma \in H\ |\ \ii_{L/K}(\sigma) \geq i \}.\]
The transition function $\ff_{L/K}:\mathbb{R}_{\geq 0} \to 
\mathbb{R}_{\geq 0}$ of $L/K$ is defined by 
\[\ff_{L/K}(i)=\int_0^i \# H_{(t)} dt\]
for any real number $i \geq 0$, 
where $\# H_{(t)}$ is the cardinality of $H_{(t)}$. 
Its inverse function is denoted by $\fp_{L/K}$. 
Then the $u$th \emph{upper numbering ramification set $H^{(u)}$ of $H$} 
are defined for a real number $u \geq 0$ by 
\[ H^{(u)}=H_{(i_0)},\quad i_0:=\widetilde{\psi}_{L/K}(u) \]
A \emph{ramification break} is a real number $i$ (resp.\ $u$) such that $H_{(i)} \not= H_{(i + \varepsilon)}$ (resp.\ $H^{(u)} \not= H^{(u + \varepsilon)}$) for any $\varepsilon>0$.  
Denote the largest lower (resp.\ upper) numbering ramification break by 
\[ i_{L/K}=\inf \{ i \in \mathbb{R}\ |\ H_{(i)}=1 \},\quad 
u_{L/K}=\inf \{ u \in \mathbb{R}\ |\ H^{(u)}=1 \}. \]
\begin{remark}
If $L/K$ is a Galois extension, then our filtration $H^{(u)}$ coincides with 
the filtration shifted by one defined in \cite{Serre79}, Chapter IV. 
\end{remark}
\begin{proposition}[\cite{Deligne84}, Prop.\ A.6.1]\label{Hilbert}
Let $\fD_{L/K}$ be the different of $L/K$. 
Then we have 
\[ u_{L/K} = i_{L/K} + v_K(\fD_{L/K}). \]
In particular, if $L/K$ has only one ramification break, 
then the above shows 
\[ u_{L/K} = e \cdot i_{L/K} = e/(e-1) \cdot v_K(\fD_{L/K}), \]
where $e$ is the ramification index of $L/K$. 
\end{proposition}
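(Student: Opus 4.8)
The plan is to reduce both sides of the identity to the single quantity $\int_0^{\infty}\bigl(\#H_{(t)}-1\bigr)\,dt$, which I will show equals $v_K(\fD_{L/K})$. First I would recall that, since $\cO_L=\cO_K[\alpha]$, the different is generated by $g'(\alpha)$, where $g$ is the minimal polynomial of $\alpha$ over $K$ (Serre, Chap.\ III, Sect.\ 6). Writing $g(X)=\prod_{\sigma\in H}(X-\sigma(\alpha))$ and differentiating, only the identity term survives when one evaluates at $\alpha$, so that $g'(\alpha)=\prod_{\sigma\neq\mathrm{id}}(\alpha-\sigma(\alpha))$. Applying $v_K$ and the definition of the order function gives
\[ v_K(\fD_{L/K})=\sum_{\sigma\neq\mathrm{id}}\ii_{L/K}(\sigma). \]

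Next I would rewrite this sum as the integral of a step function. For each fixed $t\geq0$, the non-identity embeddings $\sigma$ with $\ii_{L/K}(\sigma)\geq t$ are exactly those in $H_{(t)}\setminus\{\mathrm{id}\}$, of which there are $\#H_{(t)}-1$. Integrating in $t$ (the layer-cake identity) turns the finite sum into
\[ v_K(\fD_{L/K})=\int_0^{\infty}\bigl(\#H_{(t)}-1\bigr)\,dt, \]
and since $\#H_{(t)}=1$ for every $t>i_{L/K}$ the integrand vanishes beyond $i_{L/K}$, so the upper limit may be replaced by $i_{L/K}$.

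The second ingredient is the identification $u_{L/K}=\ff_{L/K}(i_{L/K})$. Because $H^{(u)}=H_{(\fp_{L/K}(u))}$ and $\fp_{L/K}$ is the increasing inverse of $\ff_{L/K}$, the condition $H^{(u)}=1$ is equivalent to $\fp_{L/K}(u)>i_{L/K}$, i.e.\ to $u>\ff_{L/K}(i_{L/K})$; taking the infimum yields $u_{L/K}=\ff_{L/K}(i_{L/K})$. I would then combine the two ingredients by a direct computation,
\[ u_{L/K}=\ff_{L/K}(i_{L/K})=\int_0^{i_{L/K}}\#H_{(t)}\,dt=\int_0^{i_{L/K}}1\,dt+\int_0^{i_{L/K}}\bigl(\#H_{(t)}-1\bigr)\,dt=i_{L/K}+v_K(\fD_{L/K}), \]
which is the asserted identity. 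For the ``in particular'' clause, a single ramification break forces $L/K$ to be totally ramified (otherwise the drop of $\#H_{(t)}$ from $n$ to $e$ near $t=0$ supplies a second break), so $\#H_{(t)}=e$ on $[0,i_{L/K})$; hence $\ff_{L/K}(i_{L/K})=e\cdot i_{L/K}$ and $v_K(\fD_{L/K})=(e-1)\,i_{L/K}$, and eliminating $i_{L/K}$ gives the stated chain of equalities.

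The step I expect to be most delicate is the passage $u_{L/K}=\ff_{L/K}(i_{L/K})$: one must track the behaviour of $\fp_{L/K}$ at the break point, where $\ff_{L/K}$ has a corner, and reconcile it with the definitions of $i_{L/K}$ and $u_{L/K}$ as infima. Everything else is the bookkeeping of rewriting a finite sum of order-function values as the area under the step function $\#H_{(t)}-1$, together with the standard generation of the different by $g'(\alpha)$.
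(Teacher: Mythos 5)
The paper offers no proof of this proposition at all: it is imported wholesale from \cite{Deligne84}, Prop.\ A.6.1, with the remark that the Galois case is \cite{Fontaine85}, Prop.\ 1.3. So your proposal is supplying an argument the paper omits, and the main identity is proved correctly. The three ingredients are exactly the right ones in this normalization: (a) $\fD_{L/K}=(g'(\alpha))$ because $\cO_L=\cO_K[\alpha]$, whence $v_K(\fD_{L/K})=\sum_{\sigma\neq\mathrm{id}}\ii_{L/K}(\sigma)$; (b) the layer-cake identity turning that sum into $\int_0^{i_{L/K}}\bigl(\#H_{(t)}-1\bigr)\,dt$, using that $\mathrm{id}\in H_{(t)}$ for all $t$ and that $i_{L/K}=\max_{\sigma\neq\mathrm{id}}\ii_{L/K}(\sigma)$ (consistent with how $i_f$ is used in the paper's proof of Proposition \ref{Krasner}); (c) $u_{L/K}=\ff_{L/K}(i_{L/K})$, which you justify correctly: $\ff_{L/K}$ is strictly increasing (slope $\#H_{(t)}\geq 1$), so $H^{(u)}=H_{(\fp_{L/K}(u))}=1$ iff $u>\ff_{L/K}(i_{L/K})$, and the infimum is $\ff_{L/K}(i_{L/K})$. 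Splitting $\int_0^{i_{L/K}}\#H_{(t)}\,dt$ as $i_{L/K}$ plus the different then gives the first display.

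The flaw is in the ``in particular'' clause: a single ramification break does \emph{not} force $L/K$ to be totally ramified. An unramified extension of degree $n>1$ has exactly one break, at $t=0$: every non-identity embedding satisfies $\ii_{L/K}(\sigma)=0$ (it must move $\bar\alpha$ in the residue field), so $\#H_{(0)}=n$ while $\#H_{(t)}=1$ for $t>0$. Your parenthetical argument --- that the drop of $\#H_{(t)}$ from $n$ to $e$ near $t=0$ supplies a \emph{second} break --- only works when $e>1$, because only then does a further drop from $e$ to $1$ occur at some positive $t$; it also silently uses the fact that exactly $e$ embeddings have $\ii_{L/K}(\sigma)>0$ (true, via the maximal unramified subextension, but it deserves a proof or a citation). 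The correct dichotomy is: one break implies $L/K$ is unramified or totally ramified. In the unramified case the quantity $e/(e-1)\cdot v_K(\fD_{L/K})$ is meaningless ($e=1$), so the proposition implicitly assumes $e>1$; under that reading your computation is fine: all non-identity $\sigma$ share the value $\ii_{L/K}(\sigma)=i_{L/K}$, hence $\#H_{(t)}=e$ on $[0,i_{L/K})$, giving $v_K(\fD_{L/K})=(e-1)\,i_{L/K}$ and $u_{L/K}=e\cdot i_{L/K}$. You should state the dichotomy and dispose of the unramified case explicitly rather than assert total ramification outright.
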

\begin{remark}
The Galois case is proved by \cite{Fontaine85}, Proposition 1.3. 
\end{remark}
\begin{lemma}[\cite{Deligne84}, Prop.\ A.6.2]\label{Herbrand}
Let $L$ be a finite separable extension of $K$. 
Put $H=\mathrm{Hom}(L,\overline{K})$. 
Choose an element $\alpha \in \cO_L$ such that $\cO_L=\cO_K[\alpha]$. 
Let $f$ be the minimal polynomial of $\alpha$ over $K$ and $\beta$ an element of $\Omega$. 
Put $i = \sup_{\sigma \in H} v_K(\sigma (\alpha) - \beta)$ and 
$u= v_K(f(\beta))$. 
Then we have
\[ u = \ff_{L/K}(i),\quad \fp_{L/K}(u)=i. \]
\end{lemma}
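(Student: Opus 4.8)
The plan is to express both $u$ and $\ff_{L/K}(i)$ as sums over $H$ and to match them term by term. First I would use separability: since $f$ is the minimal polynomial of $\alpha$ and $L/K$ is separable, the roots of $f$ are precisely the conjugates $\sigma(\alpha)$ for $\sigma \in H$, so $f(X)=\prod_{\sigma\in H}(X-\sigma(\alpha))$ and hence
\[ u = v_K(f(\beta)) = \sum_{\sigma\in H} v_K\bigl(\beta - \sigma(\alpha)\bigr). \]
On the transition-function side, both $i$ and the values $\ii_{L/K}(\sigma)=v_K(\sigma(\alpha)-\alpha)$ are $\ge 0$ (all of $\alpha,\sigma(\alpha),\beta$ being integral), and a fixed $\sigma$ contributes $1$ to the integrand $\#H_{(t)}$ exactly on the interval $[0,\min(i,\ii_{L/K}(\sigma))]$; interchanging sum and integral then gives
\[ \ff_{L/K}(i)=\int_0^i \#H_{(t)}\,dt = \sum_{\sigma\in H}\min\bigl(i,\ii_{L/K}(\sigma)\bigr). \]
This would reduce the lemma to producing a bijection $\sigma\mapsto\sigma'$ of $H$ satisfying $v_K(\beta-\sigma(\alpha))=\min(i,\ii_{L/K}(\sigma'))$ for every $\sigma$.

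The core step would be the construction of this bijection by a change of base point combined with the ultrametric ``isosceles triangle'' principle. Writing $j(\sigma):=v_K(\beta-\sigma(\alpha))$, the set $H$ is finite, so $i=\max_\sigma j(\sigma)$ is attained, say at $\sigma_0$ (if $\beta$ is a root of $f$ then $u=i=\infty$ and the identity holds by convention). I would extend $\sigma_0$ to an automorphism $\widetilde{\sigma}_0\in\mathrm{Aut}(\overline{K}/K)$, which is possible since $\overline{K}$ is algebraically closed, and set $\sigma':=\widetilde{\sigma}_0^{-1}\sigma|_L\in H$. Because $v_K$ is the unique extension of the valuation of $K$ and is therefore fixed by $\widetilde{\sigma}_0$,
\[ \ii_{L/K}(\sigma')=v_K\bigl(\sigma'(\alpha)-\alpha\bigr)=v_K\bigl(\sigma(\alpha)-\widetilde{\sigma}_0(\alpha)\bigr)=v_K\bigl(\sigma(\alpha)-\sigma_0(\alpha)\bigr). \]
Now $\sigma(\alpha)-\sigma_0(\alpha)=(\sigma(\alpha)-\beta)-(\sigma_0(\alpha)-\beta)$, with $v_K$-values $j(\sigma)\le i$ and $i$ respectively, so the ultrametric inequality forces $\ii_{L/K}(\sigma')=j(\sigma)$ when $j(\sigma)<i$ and $\ii_{L/K}(\sigma')\ge i$ when $j(\sigma)=i$. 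In either case $j(\sigma)=\min(i,\ii_{L/K}(\sigma'))$, and since $\sigma\mapsto\sigma'$ is a bijection of $H$ (with inverse $\sigma'\mapsto\widetilde{\sigma}_0\sigma'|_L$), summing over $\sigma$ yields exactly $u=\ff_{L/K}(i)$.

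Finally, the second identity $\fp_{L/K}(u)=i$ would follow at once from the first, since $\ff_{L/K}$ is a continuous, strictly increasing, piecewise-linear bijection of $\mathbb{R}_{\ge 0}$ (its slope $\#H_{(t)}\ge 1$ always counts at least the identity embedding), so its inverse $\fp_{L/K}$ sends $u=\ff_{L/K}(i)$ back to $i$. I expect the main obstacle to be precisely the middle paragraph: converting distances from the arbitrary point $\beta$ into the order-function values $\ii_{L/K}$. This is what forces the introduction of the base-point embedding $\sigma_0$ and the use of the Galois-invariance of $v_K$, and it is where separability (equivalently, the fact that $f$ splits with the right number of distinct roots) is genuinely used; once that translation is in place, the equality of the two sums is purely formal.
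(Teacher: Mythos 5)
The paper does not actually prove this lemma --- it is quoted from \cite{Deligne84}, Prop.\ A.6.2 --- but your argument is correct and is essentially the standard (Deligne's) proof: factor $f(\beta)=\prod_{\sigma\in H}(\beta-\sigma(\alpha))$, rewrite $\ff_{L/K}(i)=\sum_{\sigma\in H}\min\bigl(i,\ii_{L/K}(\sigma)\bigr)$, and match the two sums by composing with (an extension of) the closest conjugate embedding $\sigma_0$ and applying the ultrametric isosceles principle, using the $\mathrm{Aut}(\overline{K}/K)$-invariance of $v_K$. Your handling of the edge case ($\beta$ a root) and of the invertibility of $\ff_{L/K}$ for the second identity is also sound, so there is nothing to fix.
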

\begin{remark}
The numbering of the ramification filtration in \cite{Deligne84} is different from ours. 
We adopt the numbering in \cite{Fontaine85} since it is suitable for Proposition \ref{Krasner}, which is repeatedly used in this paper. 
\end{remark}
Let $E_K^e$ be the set of all Eisenstein polynomials of degree $e$ over $K$. 
For two polynomials $f$, $g \in E_K^e$, we put
\[ v_K(f,g)=\min_{0 \leq i \leq e-1} \{ v_K(a_i - b_i) + \frac{i}{e} \}. \]
Then we have $v_K(f,g)=v_K(f(\pi_g))$ for any root $\pi_g$ of $g$, and 
$v_K(\cdot,\cdot)$ defines an ultrametric on $E_K^e$ (\emph{cf}.\ \cite{Krasner66}, \cite{Pauli01}). 
For each $f \in E_K^e$, we put $L_f=K[X]/(f)$ and $u_f=u_{L_f/K}$. 
For any $f,g \in E_K^e$, we define an equivalence $f \sim g$ on $E_K^e$ 
by the existence of a $K$-isomorphism $L_f \cong L_g$. 
\begin{proposition}\label{Krasner}
Let $f, g \in E_K^e$. If $v_K(f,g) > u_f$, then we have $f \sim g$. 
\end{proposition}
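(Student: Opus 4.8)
The plan is to deduce the result from Krasner's lemma, using Herbrand's formula (Lemma \ref{Herbrand}) to convert the metric hypothesis $v_K(f,g) > u_f$ into a statement about how the roots of $f$ cluster around a single root of $g$. First I would fix a root $\pi_g$ of $g$; since $g$ is Eisenstein it is irreducible, so $L_g = K(\pi_g)$ and $[L_g:K]=e$. Likewise a root $\pi_f$ of $f$ satisfies $\cO_{L_f}=\cO_K[\pi_f]$, with $f$ its minimal polynomial, so Lemma \ref{Herbrand} applies with $\alpha=\pi_f$ and $\beta=\pi_g$. Setting $u=v_K(f,g)$ and using the stated identity $v_K(f,g)=v_K(f(\pi_g))$, the lemma yields
\[ i := \sup_{\sigma \in H} v_K(\sigma(\pi_f)-\pi_g) = \fp_{L_f/K}(u). \]

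Next I would exploit the hypothesis. Because $u=v_K(f,g) > u_f = u_{L_f/K}$ and $u_f$ is the largest upper ramification break, we have $H^{(u)}=1$. Since by definition $H^{(u)}=H_{(\fp_{L_f/K}(u))}=H_{(i)}$, this gives $H_{(i)}=1$; equivalently $\ii_{L_f/K}(\sigma) < i$ for every $\sigma$ other than the inclusion $L_f \inj \overline{K}$. Now I would choose $\sigma_0 \in H$ realizing the supremum, so that $\alpha := \sigma_0(\pi_f)$ is a root of $f$ with $v_K(\alpha-\pi_g)=i$ and $K(\alpha)\cong L_f$.

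The crux is to bound the distance from $\alpha$ to the remaining roots of $f$. I would use that an element of $\Gal(\overline{K}/K)$ carrying $\pi_f$ to $\alpha$ is an isometry permuting the roots of $f$, so the multiset $\{ v_K(\alpha-\alpha') \}$ over roots $\alpha'\neq\alpha$ coincides with $\{ \ii_{L_f/K}(\sigma) \}$ over nontrivial $\sigma$. Hence every such distance is $< i = v_K(\alpha-\pi_g)$, so $\pi_g$ is strictly closer to $\alpha$ than any conjugate of $\alpha$ is. Since $L_f/K$ is separable, $\alpha$ is separable over $K(\pi_g)$, and Krasner's lemma then gives $K(\alpha)\subseteq K(\pi_g)=L_g$. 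Comparing degrees, $[K(\alpha):K]=e=[L_g:K]$ forces $K(\alpha)=L_g$, whence $L_f \cong K(\alpha)=L_g$, i.e. $f\sim g$.

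I expect the middle step to be the main obstacle: correctly translating the ramification-theoretic input $H_{(i)}=1$, which is a statement about $\pi_f$ and \emph{its} conjugates, into the Krasner gap for the particular root $\alpha$ that is nearest to $\pi_g$. The Galois-invariance of the root-distance multiset is precisely what bridges these two, and the delicate bookkeeping is keeping track of which embedding $\sigma_0$ realizes the supremum so that the bound $\ii_{L_f/K}(\sigma)<i$ transfers to the distances $v_K(\alpha-\alpha')$.
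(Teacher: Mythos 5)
Your proof is correct and follows essentially the same route as the paper's: Lemma \ref{Herbrand} converts the hypothesis $v_K(f,g) > u_f$ into the statement that $\pi_g$ is strictly closer to some root of $f$ than that root is to any of its own conjugates, and Krasner's lemma plus a degree count finishes. The paper merely streamlines your bookkeeping by choosing the root $\pi_f$ nearest to $\pi_g$ at the outset and applying the increasing function $\fp_{L_f/K}$ to the inequality $v_K(f(\pi_g)) > u_f$, using $\fp_{L_f/K}(u_f) = i_f = \sup_{\sigma \neq 1} v_K(\sigma(\pi_f)-\pi_f)$, in place of your explicit manipulation of the sets $H^{(u)}$, $H_{(i)}$ and the Galois-invariance transfer to the nearest root $\alpha$.
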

\begin{proof}
Take a root $\pi_g$ of $g$ and choose a root $\pi_f$ of $f$ 
such that $v_K(\pi_g - \pi_f)$ is the maximum. 
Put $u_f=u_{L_f/K}$, $i_f=i_{L_f/K}$, $\widetilde{\psi}_f=\widetilde{\psi}_{L_f/K}$. 
By assumption, we have $v_K(f,g)=v_K(f(\pi_g))  > u_f$. 
Note that $\mathcal{O}_{L_f}=\mathcal{O}_K[\pi_f]$. 
Mapping the equation by $\widetilde{\psi}_{f}$ gives an inequality
\[ v_K(\pi_g - \pi_f) = \widetilde{\psi}_{f}(v_K(f(\pi_g))) 
> \widetilde{\psi}_{f}(u_f) = i_f = \sup_{\sigma \in H, \sigma \not= 1} v_K(\sigma (\pi_f) -\pi_f)  \]
by Lemma \ref{Herbrand}. 
We have $L_f \cong K(\pi_f) \subset K(\pi_g) \cong L_g$ by Krasner's lemma, 
Since their degrees are the same, we obtain an isomorphism $L_f \cong L_g$.  
\end{proof}
\begin{remark}
The case where $L_f/K$ is a Galois extension is proved in \cite{Yoshida11}, Proposition 3.1. 
\end{remark}

\subsection{Degree $p$}\label{Chap:DegreeP}

In this subsection, we assume that $p$ is odd\footnote{
We can easily check the isomorphy of quadratic extensions, 
so that we consider only odd primes. 
}. 
We assume that the base field is $\mathbb{Q}_p$ and denote the $p$-adic valuation by $v_p$. 

\begin{proposition}[\cite{Amano71}, Thm.\ 6 and 7]\label{Representative}
Suppose that $p$ is odd. 
Table \ref{TableDegreeP} gives exactly one polynomial for each isomorphism class 
of totally ramified extension of $\mQ$ of degree $p$. 
In the table, we put $d_f=v_p(\fD_{L_f/\mQ})$. 
\begin{table}[htbp]
\newlength{\myheight}
\setlength{\myheight}{1.3cm}
\newlength{\myheighta}
\setlength{\myheighta}{1.6cm}
\begin{tabular}{|c|c|c|c|}
\hline
$\mathrm{Family}$ & $\mathrm{Parameter}$ & $d_f$ & $u_f$ \\
\hline
\parbox[c][\myheighta][c]{0cm}{} 
$X^p + a p X^{\lambda} + p$ & $\begin{matrix} 1 \leq a \leq p-1 \\
1 \leq \lambda \leq p-1 \\
(\lambda,a) \not= (p-1,p-1) \end{matrix}$ & $1 + \cfrac{\lambda-1}{p}$ 
& $1+ \cfrac{\lambda}{p-1}$ \\
\hline
\parbox[c][\myheight][c]{0cm}{} 
$X^p - p X^{p-1} + (1 + a p) p$  & $0 \leq a \leq p-1$ & $1 + \cfrac{p-2}{p}$ & $2$ \\
\hline
\parbox[c][\myheight][c]{0cm}{} 
$X^p + (1 + a p) p$ & $0 \leq a \leq p-1$ & $1 + \cfrac{p-1}{p}$ & $2 + \cfrac{1}{p-1}$ \\
\hline
\end{tabular}
\caption{A complete system of representatives of $E_{\mQ}^p/\sim$}\label{TableDegreeP}
\end{table}
\end{proposition}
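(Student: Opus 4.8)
The plan is to verify that each polynomial in Table \ref{TableDegreeP} is Eisenstein with the stated invariants, and then to prove the two substantive halves: completeness (every degree-$p$ Eisenstein polynomial over $\mQ$ is $\sim$-equivalent to a table entry) and distinctness (no two entries are equivalent).

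I would first compute, for a root $\pi$ of an arbitrary Eisenstein polynomial $f = X^p + \sum_{i=0}^{p-1} a_i X^i$, the different $\fD_{L_f/\mQ} = (f'(\pi))$, so that $d_f = v_p(f'(\pi))$. Since $f'(\pi) = p\pi^{p-1} + \sum_{i=1}^{p-1} i a_i \pi^{i-1}$ and $v_p(\pi) = 1/p$, the valuation of the monomial $i a_i \pi^{i-1}$ is $v_p(a_i) + (i-1)/p$ while that of $p\pi^{p-1}$ is $(2p-1)/p$; the unique least term controls $d_f$. This single computation simultaneously (i) shows each table entry has a single ramification break, so that Proposition \ref{Hilbert} gives $u_f = \frac{p}{p-1} d_f$ and fills the last column, and (ii) tells me, for an arbitrary $f$, which family it belongs to: the position $\lambda$ of a controlling monomial with $v_p(a_\lambda)=1$ gives Family 1, a controlling $i=p-1$ term gives the Family 1 / Family 2 regime, and domination by $p\pi^{p-1}$ (all $a_i$ of valuation $\geq 2$) gives Family 3.

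For completeness I would then reduce an arbitrary $f$ to normal form by successively replacing its root $\pi$ with another uniformizer $\pi'$ of $L_f$; this leaves $L_f$, hence the $\sim$-class, unchanged, and lets me clear inessential coefficients. The reduction is made finite by Proposition \ref{Krasner}: once the minimal polynomial of the adjusted uniformizer agrees with a candidate table entry $g$ to precision $v_p(\,\cdot\,,g) > u_f$, Lemma \ref{Herbrand} and Krasner's lemma (as packaged there) give $f \sim g$, so every coefficient discrepancy of valuation beyond the threshold $u_f$ may be discarded. Thus I only need to normalize finitely many coefficients: the controlling coefficient $a_\lambda$ to the shape $ap$, and, in the degenerate regimes, the relevant secondary coefficient of the constant term.

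Distinctness is where I expect the real difficulty. The invariant $u_f$ (equivalently $d_f$) already separates entries with different $\lambda$ and separates Family 3 from the rest; the only coincidences of $u_f$ occur within Family 1 at fixed $\lambda$ as $a$ varies, and at $d_f = 1 + \frac{p-2}{p}$, where Family 1 with $\lambda = p-1$ meets Family 2 — this overlap is exactly why $(\lambda,a)=(p-1,p-1)$ is excluded. For these coincident cases Proposition \ref{Krasner} is useless, since it gives only a sufficient condition for isomorphism and no criterion for non-isomorphism. I would therefore introduce a strictly finer invariant that any $\mQ$-isomorphism must preserve: the residue in $\mathbb{F}_p$ of the normalized controlling digit $a_\lambda/p$ for Family 1, and the secondary digit $a$ of the constant term for Families 2 and 3. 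The crux is to show this digit is rigid, i.e.\ that the residual action on it of the admissible uniformizer substitutions (a Teichm\"uller/torus action coming from $\mu_{p-1}\subset\mQ^{\times}$) becomes trivial once the normal form is fixed, so that distinct values of $a$ genuinely give non-isomorphic fields; the boundary residue $a_\lambda/p \equiv p-1$, at which the leading computation degenerates and a secondary digit takes over, is precisely where this analysis is most delicate. Finally, I would corroborate the resulting bijection by counting: a standard mass-formula count yields exactly $p^2$ isomorphism classes of totally ramified degree-$p$ extensions of $\mQ$, matching the $(p-1)^2 - 1 + 2p = p^2$ table entries, so that completeness and distinctness together pin down the classification.
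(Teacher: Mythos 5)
The paper itself offers no proof of Proposition \ref{Representative}: it is imported wholesale from Amano (\cite{Amano71}, Thms.\ 6 and 7), as is the key auxiliary fact Proposition \ref{EquivalenceAtBreak}. So your proposal must be judged on its own terms, and it has two genuine gaps. The first is distinctness. You correctly identify the crux --- rigidity of the normalized digit $a$ under the residual action of unit substitutions --- but you do not prove it, and nothing in the paper's toolkit can: Proposition \ref{Krasner} gives only a \emph{sufficient} condition for equivalence, never non-equivalence. That rigidity \emph{is} the substance of Amano's theorems. Your fallback counting argument is also not correct as stated: the mass formula (and Krasner's count) enumerates \emph{subfields of} $\overline{\mathbb{Q}_p}$, not isomorphism classes. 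A non-Galois class of degree $p$ corresponds to $p$ distinct subfields, a cyclic class to one; Krasner's formula gives $p(p^2-p+1)$ totally ramified subfields of degree $p$, and converting this into the class count $p^2$ requires knowing that exactly $p$ of the classes are cyclic (local class field theory: $\mathbb{Q}_p^\times/(\mathbb{Q}_p^\times)^p \cong (\mathbb{Z}/p)^2$ gives $p+1$ cyclic classes, one unramified). If you supply that, then completeness plus the count does yield distinctness by pigeonhole --- making the rigidity analysis unnecessary --- but as written neither leg of your distinctness argument is complete.

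The second gap is in completeness, precisely at the boundary you flag as ``most delicate.'' Your reduction discards ``every coefficient discrepancy of valuation beyond the threshold $u_f$,'' which works when $\lambda < p-1$ since then $u_f = 1 + \lambda/(p-1) < 2$ and all the needed truncations lie strictly above $u_f$. But for type $\langle p-1 \rangle$ one has $u_f = 2$, and after normalizing the constant term to $a_0' \equiv p \pmod{p^2}$, the remaining discrepancy with the table entry's exact constant term $p$ contributes exactly $2 = u_f$ to $v_p(\cdot,\cdot)$, where Proposition \ref{Krasner} does not apply. Equivalence \emph{at} the ramification break is a genuinely different phenomenon: this is exactly why the paper's Theorem \ref{MainTheorem} must invoke Proposition \ref{EquivalenceAtBreak} (again quoted from Amano without proof, and valid only when $s \not\equiv -1 \pmod p$), and why the condition $u_\lambda' \equiv -1 \pmod p$ splits type $\langle p-1 \rangle$ between the first and second families --- the excluded parameter $(\lambda,a) = (p-1,p-1)$ is the shadow of that failure. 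Your sketch gives no mechanism for handling discrepancies sitting exactly at the threshold, so the reduction to normal form is incomplete exactly where the table's structure is subtlest.
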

\begin{remark}[\cite{Jones}, Prop.\ 2.3.1 for details]\label{GaloisGroup} 
(i) The Galois group of a polynomial $f$ of the first type in Table \ref{TableDegreeP} 
is a semi-direct product $C_p:C_{d_2}$, where $d_2 = (p-1)/\mathrm{gcd}((p-1)/m,g)$, $g=\mathrm{gcd}(p-1,\lambda)$ and 
$m$ is the order of $a \lambda$ in $\mathbb{F}_p^{\times}$. 
Moreover, its inertia group is $C_p:C_{d_1}$, 
where $d_1=(p-1)/g$. 
The second type in Table \ref{TableDegreeP} is the only case 
that $L_f/\mQ$ is a cyclic. 
The Galois group and its inertia subgroup of the third type in Table \ref{TableDegreeP} are $C_p:C_{p-1}$. 

\noindent
(ii) More precisely, in \cite{Amano71}, the explicit description of the Galois closure of $L_f/\mathbb{Q}_p$ 
as $\mathbb{Q}_p(\pi_f,\gamma)$ where $\gamma^{p-1} \in \mathbb{Q}_p$. 
An algorithm for computing the automorphism group of a finite extension $L/\mathbb{Q}_p$ 
has been implemented in Magma as the inner function \texttt{AutomorphismGroup}($L$,$\mathbb{Q}_p$), 
where the output is given as a subgroup of the symmetric group $S_p$. 
Hence we can explicitly calculate the Galois group of $f$.  
\end{remark}

Let $f = X^p + a_{p-1} X^{p-1} + \cdots + a_1 X + a_0 \in E_{\mQ}^p$. 
We say that $f$ is \emph{of type} $\typezero$ if 
$v_p(a_i) \geq 2$ for any $i$. 
If $v_p(a_i)=1$ for some $i$, then we put 
$\lambda:=\min \{1 \leq i \leq p-1 |\ v_p(a_i)=1 \}$. 
In this case, we say that $f$ is \emph{of type} $\typelambda$. 
Then we see
\[
d_f = 
\begin{cases} 1 + (\lambda - 1)/p & \text{$f$ $\mathrm{is\ of\ type}$ $\typelambda$}\\
1 + (p-1)/p & \text{$f$ $\mathrm{is\ of\ type}$ $\typezero$}.\\
\end{cases}
\]
The type of $f$ depends only on its equivalence class since 
$d_f$ does also. 
\begin{lemma}[\cite{Amano71}, Lemma 1]\label{OneBreak}
For any totally ramified extension $L/K$ of degree $p$ 
has only one ramification break. 
\end{lemma}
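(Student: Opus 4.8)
The plan is to reduce the statement to a single assertion about the valuations of the conjugates of a uniformizer, and then to read off that assertion from the Newton polygon of a shifted Eisenstein polynomial. First I would record that the lower numbering set $H_{(i)}$ changes precisely at the values attained by the order function $\ii_{L/\mQ}$ on $H \setminus \{1\}$, so that the number of lower ramification breaks equals the number of distinct values of $\ii_{L/\mQ}$ on the nonidentity elements; since $\ff_{L/\mQ}$ is a strictly increasing bijection, the same count governs the upper breaks. Hence it suffices to prove that $\ii_{L/\mQ}(\sigma) = v_p(\sigma(\pi_f) - \pi_f)$ is independent of $\sigma \in H \setminus \{1\}$, where $\pi_f$ is a root of an Eisenstein polynomial $f$ of degree $p$ with $\cO_L = \cO_{\mQ}[\pi_f]$.

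The $p-1$ nonzero differences $\sigma(\pi_f) - \pi_f$ are exactly the nonzero roots of $g(Y) := f(Y + \pi_f) = \sum_{k=0}^{p} b_k Y^k$, where $b_0 = f(\pi_f) = 0$, $b_1 = f'(\pi_f)$, $b_p = 1$, and in general $b_k = \sum_{j=k}^{p} \binom{j}{k} a_j \pi_f^{\,j-k}$ with $a_p = 1$. Since $\fD_{L/\mQ} = (f'(\pi_f))$, we have $v_p(b_1) = v_p(\fD_{L/\mQ}) = d_f$, while $v_p(b_p) = 0$. Thus the common valuation I am after exists if and only if the Newton polygon of $g$ on the index interval $[1,p]$ consists of the single segment joining $(1, d_f)$ to $(p, 0)$, in which case every nonzero root has valuation $d_f/(p-1)$ --- in agreement with the one-break case of Proposition \ref{Hilbert}.

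It therefore remains to verify that each intermediate vertex lies on or above that segment, i.e.\ $v_p(b_k) \ge d_f \cdot \frac{p-k}{p-1}$ for $2 \le k \le p-1$. Here I would use the valuation inputs $v_p(a_j) \ge 1$ (Eisenstein), sharpened by the type of $f$ --- namely $v_p(a_\lambda) = 1$ for the least such index in type $\typelambda$, and $v_p(a_j) \ge 2$ for all $1 \le j \le p-1$ in type $\typezero$ --- together with $v_p\binom{j}{k} = 0$ for $j \le p-1$ and $v_p\binom{p}{k} = 1$ for $1 \le k \le p-1$. Identifying the dominant term of $b_k$ (the $j = \lambda$ term when $k \le \lambda$, otherwise the $j = k$ or $j = p$ term) yields $v_p(b_k)$ explicitly, and substituting the tabulated value of $d_f$ reduces the desired inequality to an elementary one such as $\lambda(k-1) \ge 0$ or $p(k-1) \ge 0$. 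The main obstacle is precisely this bookkeeping: isolating the dominant term across the ranges $k \le \lambda$ and $k > \lambda$ and the two types, and checking that no cancellation spoils the estimate. Once the single-segment property is established, $\ii_{L/\mQ}$ is constant on $H \setminus \{1\}$ and the unique-break conclusion follows. Structurally, this reflects the fact that in the Galois closure the wild inertia is the simple group $C_p$, which can have only one jump; the Newton polygon computation is the concrete incarnation of that principle.
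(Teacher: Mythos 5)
Your proposal cannot be checked against an internal argument, because the paper gives none: Lemma \ref{OneBreak} is imported as a black box from Amano (\cite{Amano71}, Lemma 1). What you have written is a correct, essentially self-contained reconstruction, and it is in fact close in spirit to Amano's original method, which likewise computes the valuations of the differences of conjugates of a uniformizer directly from the Eisenstein polynomial. Your reduction (one break $\iff$ the order function $\ii_{L/\mQ}$ is constant on $H\setminus\{1\}$ $\iff$ the Newton polygon of $f(Y+\pi_f)$ between indices $1$ and $p$ is a single segment) is sound, and your dominant-term estimates do close: for type $\typelambda$ the controlling term is $j=\lambda$ when $k\le\lambda$, where the inequality reduces to $\lambda(k-1)\ge 0$, and $j=k$ when $k>\lambda$, where one needs $(p+\lambda-1)(p-k)\le p(p-1)$, which holds precisely because $k\ge\lambda+1$; for type $\typezero$ the controlling term is $j=p$, giving $p(k-1)\ge 0$. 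Two remarks. First, your worry that ``cancellation spoils the estimate'' is empty: you only need \emph{lower} bounds on $v_p(b_k)$, and the ultrametric inequality $v_p\bigl(\sum\text{terms}\bigr)\ge\min v_p(\text{terms})$ holds unconditionally---cancellation can only raise valuations, which helps you. Second, your case analysis leans on the type dichotomy, which the paper defines only over $\mathbb{Q}_p$, whereas the lemma (following Amano) is stated for an arbitrary finite base $K/\mathbb{Q}_p$; over such $K$ the coefficient valuations and $v_K\binom{p}{k}=v_K(p)$ behave differently and your bookkeeping does not transfer verbatim. Both the case analysis and this restriction disappear if you observe that
\[ d_f=\min_{1\le j\le p}\left\{ v_K(ja_j)+\frac{j-1}{p}\right\}\qquad (a_p=1), \]
this minimum being exactly $v_K(f'(\pi_f))$ (it is attained by a unique term, since the fractional parts $(j-1)/p$ are pairwise distinct), so that every term $\binom{j}{k}a_j\pi_f^{\,j-k}$ of $b_k$ has valuation at least $d_f-(k-1)/p$; the required bound $v_K(b_k)\ge d_f\cdot\frac{p-k}{p-1}$ then reduces, upon dividing by $k-1>0$, to $d_f\ge (p-1)/p$, which is clear since $d_f\ge 1$ for any Eisenstein polynomial. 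This uniform estimate proves the lemma in the generality in which it is stated, with no types and no cases.
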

\noindent
By Proposition \ref{Hilbert} and Lemma \ref{OneBreak}, 
we have 
\[
u_f = 
\begin{cases}
1 + \lambda/(p-1) & \text{$f$ is of type $\typelambda$}\\
2 + 1/(p-1) & \text{$f$ is of type $\typezero$}.
\end{cases}
\]

\begin{proposition}\label{Prop:Type0TypeLambda}
For $f=\sum_i a_i x^i$, $g=\sum_i b_i x^i \in E_{\mathbb{Q}_p}^p$, 
if one of the following conditions is satisfied, then we have $f \sim g$. 

\noindent
$\mathrm{(i)}$ Both $f$ and $g$ are of type $\typelambda$, 
$\lambda < p-1$ and $v_p(a_i - b_i) \geq 2$ $(i=0,\lambda)$. 

\noindent
$\mathrm{(ii)}$ Both $f$ and $g$ are of type $\langle p-1 \rangle$ and 
$v_p(a_i - b_i) \geq 3$ $(i=0,p-1)$. 

\noindent
$\mathrm{(iii)}$ Both $f$ and $g$ are of type $\typezero$ and $v_p(a_i-b_i) \geq 3$ $(i=0,1)$.  
\end{proposition}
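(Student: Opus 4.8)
The plan is to verify in each of the three cases the hypothesis $v_p(f,g) > u_f$ of Proposition \ref{Krasner}; once that inequality is established, the proposition yields $f \sim g$ at once. I will use the formula $v_p(f,g) = \min_{0 \le i \le p-1}\{v_p(a_i-b_i)+i/p\}$ together with the known values of $u_f$, namely $1+\lambda/(p-1)$ for type $\typelambda$ and $2+1/(p-1)$ for type $\typezero$. Thus the entire argument reduces to bounding each $v_p(a_i-b_i)$ from below, identifying the index that minimizes $v_p(a_i-b_i)+i/p$, and checking that this minimum strictly exceeds $u_f$.

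First I would assemble the coefficient bounds. At the indices where a hypothesis is imposed (that is, $i=0,\lambda$ in (i) and $i=0,1$ in (ii) and (iii)) I use the hypothesis directly. At every other index I use only the type: for type $\typelambda$ one has $v_p(a_i),v_p(b_i)\ge 2$ when $1\le i<\lambda$ and $v_p(a_i),v_p(b_i)\ge 1$ when $\lambda<i\le p-1$, whence $v_p(a_i-b_i)\ge 2$ and $\ge 1$ respectively; for type $\typezero$ one has $v_p(a_i),v_p(b_i)\ge 2$ for every $1\le i\le p-1$, so $v_p(a_i-b_i)\ge 2$ throughout.

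Carrying this out case by case, in (i) the smallest contribution comes from $i=\lambda+1$, a legitimate index since $\lambda\le p-2$, and gives $v_p(f,g)\ge 1+(\lambda+1)/p$; the required inequality $1+(\lambda+1)/p > 1+\lambda/(p-1)$ is equivalent to $\lambda<p-1$, so the hypothesis $\lambda<p-1$ is precisely what the argument needs. In (ii), where $u_f=2$, the smallest contribution is at $i=1$ and yields $v_p(f,g)\ge 2+1/p>2$. In (iii), where $u_f=2+1/(p-1)$, the smallest contribution is at $i=2$ and yields $v_p(f,g)\ge 2+2/p$; the inequality $2+2/p>2+1/(p-1)$ reduces to $p>2$, which holds since $p$ is odd.

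None of these steps is computationally demanding, so the hard part will be the bookkeeping: confirming that the indices named above genuinely attain the minimum and that each comparison with $u_f$ is strict. The one point deserving care is the strengthened bound $v_p(a_i-b_i)\ge 3$ (rather than $\ge 2$) imposed at the special indices in (ii) and (iii). There $u_f$ is at least $2$, so the weaker bound would produce a contribution that fails to exceed $u_f$ (for instance $2$ from $i=0$, or $2+1/p$ from $i=1$ in (iii)); the extra unit of valuation is exactly what supplies the strict inequality, which is why the stated hypotheses are sharp in these two cases but can be relaxed to $\ge 2$ in (i).
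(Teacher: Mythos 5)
Your proof is correct and takes essentially the same route as the paper's: both reduce each case to checking the hypothesis $v_p(f,g) > u_f$ of Proposition~\ref{Krasner}, using the type conditions to bound $v_p(a_i-b_i)$ at the unconstrained indices, and both arrive at the same binding bounds ($1+(\lambda+1)/p$ in (i), $2+1/p$ in (ii), $2+2/p$ in (iii)) and the same decisive inequalities, namely $\lambda<p-1$ in (i) and the oddness of $p$ in (iii). Your closing remark on why the $\geq 3$ hypotheses in (ii) and (iii) cannot be relaxed within this method is a correct observation that the paper leaves implicit.
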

\begin{proof}
In each case, it is enough to show $v_p(f,g) > u_f$ by Proposition \ref{Krasner}. 
In case (i), by assumption, we have
\[ v_p(f,g) = \min_{1 \leq i \leq p-1} \left\{ v_p(a_i - b_i) + \frac{i}{p} \right\} 
\geq \min \left\{ 2, 1 + \frac{\lambda+1}{p} \right\} > 1 + \frac{\lambda}{p-1} = u_f. \]
Similarly, in case (ii), we have
\[ v_p(f,g) = \min_{1 \leq i \leq p-1} \left\{ v_p(a_i - b_i) + \frac{i}{p} \right\} \geq 2 + \frac{1}{p} > 1 + \frac{\lambda}{p-1} = u_f. \]
Finally, in case (iii), note that $v_p(a_i - b_i) \geq 2$ for any $i$, 
so that we have
\[ v_p(f,g) = \min_{1 \leq i \leq p-1}\left\{v_p(a_i - b_i) + \cfrac{i}{p} \right\} 
\geq 2 + \frac{2}{p} > 2 + \frac{1}{p-1} = u_f, \]
where the last inequality follows from the oddness of $p$. 
\end{proof}
\begin{corollary}\label{Cor:Type0TypeLambda}
Let $f=\sum_i a_i x^i \in E_{\mathbb{Q}_p}^p$. 

\noindent
$\mathrm{(i)}$ If $f$ is of type $\typelambda$, then $f \sim x^p + a_{\lambda} x^{\lambda} + a_0$. 

\noindent
$\mathrm{(ii)}$ If $f$ is of type $\typezero$ , then $f \sim x^p + a_1 x + a_0$. 
Furthermore, if $v_p(a_1) \not= 2$, then $f \sim x^p + a_0$. 
\end{corollary}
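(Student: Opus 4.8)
The plan is to apply Proposition \ref{Prop:Type0TypeLambda} in each case by exhibiting a simpler polynomial in the same equivalence class. The strategy throughout is to start from $f=\sum_i a_i x^i$ and construct a target polynomial $g$ by zeroing out (or simplifying) all coefficients except those in the privileged positions singled out by the proposition, then verify that the remaining coefficient differences satisfy the required valuation bounds so that $f\sim g$ follows.

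For part (i), when $f$ is of type $\typelambda$, I would set $g = x^p + a_\lambda x^\lambda + a_0$. Since $f$ is Eisenstein of type $\typelambda$, every coefficient $a_i$ with $i\neq 0,\lambda$ satisfies $v_p(a_i)\geq 2$ (by definition of $\lambda$ as the minimal index with $v_p(a_i)=1$, all earlier coefficients have valuation at least $2$, and the normalization $a_0=p\cdot(\text{unit})$ is preserved). The coefficient differences $a_i-b_i$ vanish at $i=0,\lambda$ and equal $a_i$ elsewhere, so $v_p(a_i-b_i)\geq 2$ for $i=0,\lambda$ holds trivially (the differences are zero), and the hypothesis of Proposition \ref{Prop:Type0TypeLambda}(i) is met when $\lambda<p-1$; when $\lambda=p-1$ I would instead invoke part (ii), checking $v_p(a_i-b_i)\geq 3$ at $i=0,p-1$, again trivially satisfied since those differences are zero. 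Here I must be slightly careful that $g$ is itself Eisenstein of the same type, which is immediate since $a_0$ and $a_\lambda$ are unchanged.

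For part (ii), when $f$ is of type $\typezero$, I would first take $g = x^p + a_1 x + a_0$; all coefficients $a_i$ with $2\leq i\leq p-1$ have $v_p(a_i)\geq 2$, and the differences at the retained positions $i=0,1$ are zero, so $v_p(a_i-b_i)\geq 3$ there holds trivially and Proposition \ref{Prop:Type0TypeLambda}(iii) applies to give $f\sim g$. For the further reduction, suppose $v_p(a_1)\neq 2$; since $f$ is of type $\typezero$ we already know $v_p(a_1)\geq 2$, so the hypothesis forces $v_p(a_1)\geq 3$. Then comparing $x^p+a_1 x+a_0$ with $h=x^p+a_0$, the only nonzero difference is in the degree-$1$ coefficient, where $v_p(a_1-0)=v_p(a_1)\geq 3$, so the condition $v_p(a_i-b_i)\geq 3$ at $i=0,1$ is again satisfied and part (iii) yields $x^p+a_1x+a_0\sim x^p+a_0$, whence $f\sim x^p+a_0$ by transitivity.

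The only real subtlety, and the step to watch, is the bookkeeping on the type hypothesis: one must confirm that the simplified target polynomial lies in the same type as $f$ (so that Proposition \ref{Prop:Type0TypeLambda} is applicable, since it requires \emph{both} $f$ and $g$ to share a type). This is automatic in each case because I only delete coefficients of valuation $\geq 2$ while preserving $a_0$ and the distinguishing coefficient, so neither $\lambda$ nor the type-$\typezero$ condition is disturbed. Everything else reduces to the observation that zeroing a coefficient makes the corresponding difference equal to the original coefficient, whose valuation bound is already guaranteed by the type of $f$.
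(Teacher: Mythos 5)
Your proof is correct and matches the paper's own (implicit) argument: the corollary is stated as an immediate consequence of Proposition \ref{Prop:Type0TypeLambda}, applied to the truncated polynomial, exactly as you do---the differences at the privileged indices vanish identically, so the hypotheses hold trivially. One caveat: your parenthetical claim that type $\langle\lambda\rangle$ forces $v_p(a_i)\geq 2$ for all $i\neq 0,\lambda$ is false (coefficients with $i>\lambda$ may have valuation exactly $1$, since $\lambda$ is only the \emph{minimal} index with $v_p(a_\lambda)=1$), but this is harmless because Proposition \ref{Prop:Type0TypeLambda} constrains only the types of $f$ and $g$ and the differences at $i=0,\lambda$ (resp.\ $i=0,1$), never the remaining coefficients, and the type of your target polynomial $g$ is clear by inspection regardless.
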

\noindent
To consider the case where $f$ is of type $\typezero$ and $v_p(a_1)=2$, 
we need some devise. 
\begin{lemma}\label{Lem:Type0}
Let $f \in E_{\mathbb{Q}_p}^p$ and $\pi$ be a root of $f$. 

\noindent
$\mathrm{(i)}$ For $u \in \mathbb{Z}_p^{\times}$, if we put $\pi'=\pi + u \pi^2$, 
then we have $\mathbb{Q}_p(\pi')=\mathbb{Q}_p(\pi)$. 

\noindent
$\mathrm{(ii)}$ Take the minimal polynomial $g \in E_{\mathbb{Q}_p}^p$ of $\pi'$.
Then we have
\[ g(ux^2 + x) = -u^p f(x) f(-x-u^{-1}). \]
\end{lemma}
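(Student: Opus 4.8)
The plan is to handle (i) by a valuation argument and (ii) by factoring $g$ over $\overline{\mathbb{Q}_p}$ through its roots.

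For (i), first I would observe that since $f$ is Eisenstein, $\pi$ is a uniformizer of $\mathbb{Q}_p(\pi)$, and $\pi' = \pi(1 + u\pi)$ with $1 + u\pi$ a unit in the valuation ring; hence $v_p(\pi') = v_p(\pi) = 1/p$. In particular $\pi' \notin \mathbb{Q}_p$. Since $\mathbb{Q}_p(\pi') \subseteq \mathbb{Q}_p(\pi)$ and $[\mathbb{Q}_p(\pi):\mathbb{Q}_p] = p$ is prime, multiplicativity of degrees forces any intermediate field to be either $\mathbb{Q}_p$ or all of $\mathbb{Q}_p(\pi)$; as $\pi' \notin \mathbb{Q}_p$, I conclude $\mathbb{Q}_p(\pi') = \mathbb{Q}_p(\pi)$. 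This also shows $\deg g = p$, and since $\pi'$ is a uniformizer, $g$ is indeed Eisenstein.

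For (ii), let $\pi = \pi_1, \dots, \pi_p$ be the roots of $f$ in $\overline{\mathbb{Q}_p}$. By (i), $g$ is monic of degree $p$ with roots $\pi_i' = \pi_i + u\pi_i^2$, so $g(Y) = \prod_{i=1}^p (Y - \pi_i')$. The key step is to substitute $Y = ux^2 + x$ and factor each resulting term as
\[ ux^2 + x - \pi_i - u\pi_i^2 = (x - \pi_i)(ux + u\pi_i + 1). \]
Taking the product over $i$ and recognizing $\prod_i (x - \pi_i) = f(x)$ gives
\[ g(ux^2 + x) = f(x) \prod_{i=1}^p (ux + u\pi_i + 1) = u^p\, f(x) \prod_{i=1}^p (x + u^{-1} + \pi_i). \]
It then remains to identify the second product: since $f(-x - u^{-1}) = \prod_i (-x - u^{-1} - \pi_i) = (-1)^p \prod_i (x + u^{-1} + \pi_i)$ and $p$ is odd, this equals $-\prod_i (x + u^{-1} + \pi_i)$, and substituting yields the claimed identity.

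The computation itself is routine once the factorization $ux^2 + x - \pi_i' = (x-\pi_i)(ux + u\pi_i + 1)$ is in hand; this is the crux, and it reflects the fact that the two roots of the quadratic $ux^2 + x - \pi_i'$ in $x$ are precisely $x = \pi_i$ and $x = -\pi_i - u^{-1}$. The only points I expect to watch are the sign $(-1)^p = -1$ entering through the oddness of $p$ and the factor $u^p$ obtained by pulling $u$ out of each linear term.
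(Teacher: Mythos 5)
Your proof is correct and follows essentially the same route as the paper: the paper also factors each quadratic as $(ux^2+x)-\pi_i' = (x-\pi_i)(ux+1+u\pi_i) = -u(x-\pi_i)\{(-x-u^{-1})-\pi_i\}$ and multiplies over the conjugates, the sign $(-1)^p=-1$ coming from the oddness of $p$. Your valuation argument for (i), which the paper dismisses as trivial, is a fine way to fill in that detail.
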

\begin{proof}
(i) is trivial. 
We prove (ii). 
Let $\pi_1,\pi_2,\dots,\pi_p$ be the conjugate elements of $\pi$ over $\mathbb{Q}_p$. 
Then the conjugate elements of $\pi'$ are $\pi_1+u\pi_1^2,\pi_2+u\pi_2^2,\dots,\pi_p+u\pi_p^2$. 
For all $1 \leq i \leq p$, multiply the equations
\[ (ux^2 + x) - (\pi_i - u\pi_i^2) = (x - \pi_i) (ux + 1 + u \pi_i) = -u (x - \pi_i) \left\{ (-x-u^{-1}) - \pi_i \right\}, \]
then we have the result. 
\end{proof}
\begin{proposition}\label{Type0Hard}
Let $f = \sum_i a_i x^i \in E_{\mathbb{Q}_p}^p$. 
If $f$ is of type $\typezero$ and $v_p(a_1)=2$, then we have 
\[ f \sim x^p + a_0(1 - u^p a_0), \]
where we put $u=-a_1/(p a_0)$. 
\end{proposition}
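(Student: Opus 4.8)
The plan is to reduce $f$ to a polynomial with no linear term by a well-chosen change of uniformizer, and then to recognize the resulting minimal polynomial by the proximity criterion of Proposition \ref{Prop:Type0TypeLambda}(iii). First I would apply Corollary \ref{Cor:Type0TypeLambda}(ii) to replace $f$ by the equivalent trinomial $x^p + a_1 x + a_0$, so that I may assume $f = x^p + a_1 x + a_0$ with $v_p(a_0) = 1$ and $v_p(a_1) = 2$. For $u = -a_1/(p a_0)$ one then has $v_p(u) = 2 - 1 - 1 = 0$, so $u \in \mathbb{Z}_p^{\times}$ and Lemma \ref{Lem:Type0} is available.

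Next I would fix a root $\pi$ of $f$ and set $\pi' = \pi + u\pi^2$. By Lemma \ref{Lem:Type0}(i) the element $\pi'$ generates $L_f$, so its minimal polynomial $g = \sum_i b_i x^i$ satisfies $g \sim f$ and is again of type $\typezero$. The two coefficients that matter, $b_0$ and $b_1$, are read off from the identity $g(ux^2 + x) = -u^p f(x) f(-x - u^{-1})$ of Lemma \ref{Lem:Type0}(ii): evaluating at $x = 0$ gives $b_0 = -u^p a_0 f(-u^{-1})$, while comparing the coefficient of $x$ on both sides gives $b_1 = -u^p \cdot [\text{coefficient of } x \text{ in } f(x) f(-x - u^{-1})]$, since among the terms of $g(ux^2+x)$ only the $b_1$-term contributes in degree one.

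I would then substitute $a_1 = -p a_0 u$ into these expressions and track valuations. The expected outcome is $b_0 = a_0(1 - u^p a_0) - p u^p a_0^2$, so that $v_p\bigl(b_0 - a_0(1 - u^p a_0)\bigr) = 3$, and, after the two lowest-order contributions cancel, $b_1 = p^2 a_0^2 u^{p+1}$, so that $v_p(b_1) = 4$. Since both $g$ and $x^p + a_0(1 - u^p a_0)$ are of type $\typezero$ and their coefficients in degrees $0$ and $1$ differ by valuation $\geq 3$, Proposition \ref{Prop:Type0TypeLambda}(iii) yields $g \sim x^p + a_0(1 - u^p a_0)$; transitivity of $\sim$ then gives the claim.

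The main obstacle is the coefficient extraction together with the valuation bookkeeping after the substitution $a_1 = -p a_0 u$. The delicate point is the linear coefficient: the two lowest-order contributions to $b_1$, both of valuation $2$, cancel exactly, and it is precisely this cancellation that raises $v_p(b_1)$ to $4 \geq 3$; overlooking it would leave a spurious linear term of valuation $2$, for which Proposition \ref{Prop:Type0TypeLambda}(iii) could not be invoked. One should also confirm that $x^p + a_0(1 - u^p a_0)$ is Eisenstein of type $\typezero$, i.e. that $v_p(1 - u^p a_0) = 0$, which is immediate from $v_p(u^p a_0) = 1$.
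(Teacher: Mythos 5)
Your proposal is correct and follows essentially the same route as the paper: reduce to the trinomial $x^p+a_1x+a_0$ via Corollary \ref{Cor:Type0TypeLambda}(ii), pass to $\pi'=\pi+u\pi^2$ via Lemma \ref{Lem:Type0}, observe that the choice $u=-a_1/(pa_0)$ makes the two valuation-$2$ terms in $b_1$ cancel (giving $b_1=u^{p-1}a_1^2=p^2a_0^2u^{p+1}$, $v_p(b_1)=4$, and $b_0=a_0(1-u^pa_0)-pu^pa_0^2$), and conclude with Proposition \ref{Prop:Type0TypeLambda}(iii). The only cosmetic difference is that you invoke Proposition \ref{Prop:Type0TypeLambda}(iii) once, comparing $g$ directly with $x^p+a_0(1-u^pa_0)$, where the paper uses it twice (first $g\sim x^p+b_0$, then $x^p+b_0\sim x^p+a_0(1-u^pa_0)$); your added checks that $u\in\mathbb{Z}_p^{\times}$ and that $v_p(1-u^pa_0)=0$ are correct and harmless.
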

\begin{proof}
By Corollary \ref{Cor:Type0TypeLambda} (ii), we have $f \sim f_1:=x^p + a_1 x + a_0$. 
Take a root $\pi$ of $f_1$. 
Let  $g_1=\sum_i b_i x^i$ be the minimal polynomial of $\pi + u \pi^2$ over $\mathbb{Q}_p$. 
By Lemma \ref{Lem:Type0} (i), we have $f_1 \sim g_1$ 
and by (ii), an equality
\[ g_1(ux^2 + x) = -u^p f_1(x) f_1(-x -u^{-1}) \]
holds. 
By comparing the coefficients of $x^0$ and $x^1$ in the both-hand side, 
we have
\[ b_1 = a_1 + up a_0 + u^{p-1} a_1^2 = u^{p-1} a_1^2,\quad b_0=a_0 + u^{p-1} a_0 a_1 - u^p a_0^2. \]
Since the type is independent of equivalence classes, $g_1$ is also of type $\typezero$. 
By the inequality $v_p(b_1) \geq 3$, Proposition \ref{Prop:Type0TypeLambda} (iii) gives  
the equivalence
\[ g_1 \sim x^p + b_0 \] 
follows. 
By assumption, we note that $v_p(a_0 a_1) \geq 3$, 
so that Proposition \ref{Prop:Type0TypeLambda} (iii) gives
the equivalence
\[ x^p + b_0 \sim x^p + a_0(1 - u^p a_0). \]
\end{proof}
The following lemma is a result in field theory:
\begin{lemma}\label{Linearity}
Let $f = X^e + a_{e-1} X^{e-1} + \cdots + a_1 X + a_0 \in E_K^e$ and 
$\pi_f$ a root of $f$. 
Then, for any $u \in U_K$, 
the Eisenstein polynomial of $u \pi_f$ over $K$ is 
\[ X^e + u a_{e-1} X^{e-1} + u^2 a_{e-2} X^{e-2} + \cdots + u^{e-1} a_1 X + u^e a_0. \]
\end{lemma}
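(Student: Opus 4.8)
The plan is to exhibit the claimed polynomial directly as a rescaling of $f$ and then verify that it has the correct degree and vanishes at $u\pi_f$. Concretely, I would set $g(X) = u^e f(u^{-1}X)$ and compute its coefficients. Expanding, the coefficient of $X^{e-i}$ becomes $u^e \cdot u^{-(e-i)} a_{e-i} = u^i a_{e-i}$, which matches the asserted list exactly; in particular $g$ is monic of degree $e$. This single substitution does all the combinatorial work, so there is no genuine calculation to grind through.

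Next I would check that $u\pi_f$ is a root of $g$, which is immediate:
\[ g(u\pi_f) = u^e f\bigl(u^{-1}\cdot u\pi_f\bigr) = u^e f(\pi_f) = 0. \]
To upgrade ``$g$ has $u\pi_f$ as a root'' to ``$g$ is the minimal polynomial of $u\pi_f$,'' I would invoke that $u \in U_K \subset K^\times$, so that $K(u\pi_f) = K(\pi_f)$ and hence $[K(u\pi_f):K] = e$. A monic polynomial of degree $e$ over $K$ that vanishes at an element of degree $e$ over $K$ must be its minimal polynomial, so $g$ is the minimal polynomial of $u\pi_f$.

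Finally I would confirm that $g$ is actually Eisenstein. Since $u$ is a unit we have $v_K(u^i a_{e-i}) = v_K(a_{e-i})$, so every coefficient of $g$ has the same valuation as the corresponding coefficient of $f$; thus $g$ inherits the Eisenstein property ($v_K(a_j)\geq 1$ for all $j$ and $v_K(a_0)=1$) from $f$, giving $g \in E_K^e$. There is essentially no obstacle in this argument; the only point deserving a word of justification is that the rescaled polynomial is genuinely minimal, which rests entirely on $u$ being a nonzero element of the base field $K$, so that adjoining $u\pi_f$ and $\pi_f$ yields the same extension.
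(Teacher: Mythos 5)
Your proof is correct and fills in exactly the standard verification that the paper dismisses with ``This is trivial'': rescaling via $g(X) = u^e f(u^{-1}X)$, checking $g(u\pi_f)=0$, and using $K(u\pi_f)=K(\pi_f)$ together with monicity and degree $e$ to conclude minimality, with the Eisenstein property preserved because $u$ is a unit. Nothing is missing, and your route is the same one the paper implicitly takes.
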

\begin{proof}
This is trivial. 
\end{proof}
\noindent
By the following theorem, we can identify a given polynomial as the one in Table \ref{TableDegreeP}: 
\begin{theorem}\label{MainTheorem}
Let $f =\sum_i a_i x^i \in E_{\mathbb{Q}_p}^p$. 
If $a_i \not= 0$, we put $u_i = a_i/p^{s_i}$ $(s_i=v_p(a_i))$. 
For $u \in \mathbb{Z}_p^{\times}$ and $n \in \mathbb{Z}_{\geq 1}$, 
we denote by $\langle u$ mod $p^n \rangle$ the integer $i$ such that $u \equiv i \pmod{p^n}$ and $1 \leq i \leq p^n-1$. 
Put 
\[ \mathfrak{u}=\langle u_0^{-1}\ \mathrm{mod}\ p \rangle,\quad  
 a'_0=\mathfrak{u}^p a_0,\quad  
a''_0=a_0 \left\{ 1 + (u_0^{-1} u_1)^p a_0 \right\}, \]
\[ u_{\lambda}' = \mathfrak{u}^{p-\lambda} u_{\lambda}\ \mathrm{and}\  
a'_{\lambda} = u_{\lambda}' p. \] 

\noindent
$\mathrm{(i)}$ If $f$ is of type $\typelambda$, 
then 
\[
f \sim 
\begin{cases}
f_1:=X^p + \langle a_{\lambda}'\ \mathrm{mod}\ p^2 \rangle \cdot X^{\lambda} + p & 
\text{$\mathrm{if}$ $\lambda \not= p-1$ $\mathrm{or}$ 
$u_{\lambda}' \not\equiv -1$ $\mathrm{(mod}$ $p\mathrm{)}$},\\
f_2:=X^p - p X^{p-1} + \langle a'_0\ \mathrm{mod}\ p^3 \rangle & \text{$\mathrm{if}$ 
$\lambda=p-1$ $\mathrm{and}$ $u_{\lambda}' \equiv -1$ $\mathrm{(mod}$ $p\mathrm{)}$}. \\
\end{cases}
\]

\noindent
$\mathrm{(ii)}$ If $f$ is of type $\typezero$, 
then 
\[ 
f \sim 
\begin{cases}
f_3:=X^p + \langle a'_0\ \mathrm{mod}\ p^3 \rangle & \text{$\mathrm{if}$ $v_p(a_1) \not=2$}, \\
f_4:=X^p + \langle a_0''\ \mathrm{mod}\ p^3 \rangle & 
\text{$\mathrm{if}$ $v_p(a_1)=2$}. \\
\end{cases}
\]
\end{theorem}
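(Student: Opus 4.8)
The plan is to treat the two types separately, since the type of $f$ is a class invariant (it is read off from $d_f$), and within each type to reduce $f$ to a short Eisenstein polynomial by the results already proved, scale the uniformizer by the unit $\mathfrak{u}$ via Lemma \ref{Linearity} to normalize the leading unit of the constant term, and finally truncate the surviving coefficients modulo the power of $p$ that Proposition \ref{Krasner} permits for the known value of $u_f$. The quantities $a_\lambda'$, $a_0'$, $a_0''$ in the statement are precisely what these operations produce, and the resulting polynomials are entries of Table \ref{TableDegreeP}, so the identification rests on the completeness asserted in Proposition \ref{Representative}.

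Concretely, for type $\typezero$ I would argue as follows. If $v_p(a_1)\neq 2$, then Corollary \ref{Cor:Type0TypeLambda}(ii) gives $f\sim X^p+a_0$; scaling by $\mathfrak{u}$ replaces $a_0$ by $a_0'=\mathfrak{u}^p a_0$, whose unit part is $\equiv 1\pmod p$, and since $u_f=2+1/(p-1)<3$, Proposition \ref{Krasner} lets me discard everything in the constant beyond $p^3$, yielding $f_3$. If $v_p(a_1)=2$, I would first apply Proposition \ref{Type0Hard}: writing $u=-a_1/(pa_0)=-u_1/u_0$, one checks directly that $a_0(1-u^p a_0)=a_0''$, so $f\sim X^p+a_0''$, which is again of the previous shape, and applying the $v_p(a_1)\neq 2$ step to it and truncating the constant modulo $p^3$ yields $f_4$.

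For type $\typelambda$, Corollary \ref{Cor:Type0TypeLambda}(i) gives $f\sim X^p+a_\lambda X^\lambda+a_0$, and scaling by $\mathfrak{u}$ turns this into $X^p+a_\lambda' X^\lambda+a_0'$ with $a_0'\equiv p\pmod{p^2}$ and $a_\lambda'=u_\lambda' p$. When $\lambda<p-1$ one has $u_f=1+\lambda/(p-1)<2$, so $v_p(a_0'-p)\geq 2>u_f$ and the $X^\lambda$ term contributes $\geq 2+\lambda/p$; hence Proposition \ref{Prop:Type0TypeLambda}(i) gives $f\sim f_1$, and $\langle a_\lambda'\ \mathrm{mod}\ p^2\rangle=\langle u_\lambda'\ \mathrm{mod}\ p\rangle\,p$ is a legitimate first-family coefficient. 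When $\lambda=p-1$ and $u_\lambda'\equiv -1$ one has $u_f=2$; here the constant must be pinned only modulo $p^3$ (giving a factor $(1+ap)p$) and the $X^{p-1}$ coefficient only modulo $p^2$ (giving $-p$), both of which Proposition \ref{Krasner} supplies, so $f\sim f_2$.

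The main obstacle is the remaining case, $\lambda=p-1$ with $u_\lambda'\not\equiv -1$, where the target $f_1$ has constant term exactly $p$ while $u_f=2$: scaling controls the constant only modulo $p^2$, so $v_p(f,f_1)$ equals $2=u_f$ and Proposition \ref{Krasner} falls just short. Indeed, one checks that no further scaling by a unit $\equiv 1\pmod p$ can remove the $p^2$-part of the constant while preserving the class mod $p$ of the $X^{p-1}$ coefficient, so a genuinely new device is needed. To close the gap I would invoke Lemma \ref{Lem:Type0}, valid for an arbitrary Eisenstein polynomial: a substitution $\pi\mapsto\pi+u\pi^2$ transfers the unwanted $p^2$-part of the constant into the $X^{p-1}$ coefficient, where it is harmless since only its residue mod $p$ matters, exactly as in the proof of Proposition \ref{Type0Hard}. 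Alternatively, and more cleanly, since by Remark \ref{GaloisGroup} the second family is precisely the cyclic case, the hypothesis $u_\lambda'\not\equiv -1$ certifies that $L_f$ belongs to the first family; one then verifies that the normalized leading unit $u_\lambda'\ \mathrm{mod}\ p$ is a class invariant equal to the first-family parameter, whence the completeness of Table \ref{TableDegreeP} (Proposition \ref{Representative}) forces $f\sim f_1$.
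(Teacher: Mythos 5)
Your treatment of type $\typezero$ (both subcases), of type $\typelambda$ with $\lambda<p-1$, and of the case $\lambda=p-1$, $u'_\lambda\equiv-1\pmod p$ coincides with the paper's proof: reduce via Corollary \ref{Cor:Type0TypeLambda} (resp.\ Proposition \ref{Type0Hard}), scale by $\mathfrak{u}$ via Lemma \ref{Linearity}, then truncate using Proposition \ref{Krasner}. You also correctly isolate the crux: when $\lambda=p-1$ and $u'_\lambda\not\equiv-1$, one has $u_f=2$ while scaling controls the constant term only modulo $p^2$, so Krasner falls exactly at the break. The paper closes this case by citing Proposition \ref{EquivalenceAtBreak} (Amano's Proposition 5, stated right after the theorem): for $x^p+u'_\lambda p\,x^{p-1}+tp$ with $t\equiv 1\pmod p$, the constant term may be changed exactly at the break level precisely under the hypothesis $u'_\lambda\not\equiv-1\pmod p$, and Krasner then finishes. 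Your proposal replaces this citation by two alternatives, and neither is complete as written; this is where the gap lies.

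Your first device (Lemma \ref{Lem:Type0}) can in fact be pushed through, but your sketch omits the one computation that makes it work, and that computation is the \emph{only} place where the case hypothesis enters. Writing $g_2=x^p+sp\,x^{p-1}+tp$ with $s=u'_\lambda$ and $t\equiv 1+cp\pmod{p^2}$, comparing constant terms in Lemma \ref{Lem:Type0}(ii) gives the new constant $b_0=tp\{1-(us+u^pt)p\}$, so the substitution shifts the $p^2$-digit of the constant by $-u(s+1)\bmod p$; normalizing the constant to $p\pmod{p^3}$ therefore means solving $u(s+1)\equiv c\pmod p$, which admits a unit solution precisely because $s\not\equiv-1$ (and when $c=0$ one must skip the substitution, as $u=0$ is not a unit, and apply Krasner directly). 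One must also check $b_{p-1}=sp-us^2p^2\equiv sp\pmod{p^2}$ and that the middle coefficients keep valuation $\geq 2$ (type invariance), before Krasner closes the argument. As sketched, your ``transfer'' claim never uses $u'_\lambda\not\equiv-1$, so it would apply verbatim when $u'_\lambda\equiv-1$ and ``prove'' that second-family polynomials are equivalent to first-family ones, which is false. Your second, allegedly cleaner alternative is circular: the assertions that $u'_\lambda\not\equiv-1$ forces $L_f$ out of the cyclic (second) family and that $u'_\lambda\bmod p$ is a class invariant equal to the first-family parameter are exactly the content of the theorem in this case; Remark \ref{GaloisGroup} computes Galois groups only for the normalized representatives in Table \ref{TableDegreeP}, so invoking it for a general $f$ presupposes already knowing which representative $f$ is equivalent to.
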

\begin{proof}
(i) We assume that $f$ is of type $\typelambda$. 
Then we have $f \sim g_1:=x^p + a_{\lambda} x^{\lambda} +a_0$ by Corollary \ref{Cor:Type0TypeLambda} (i). 
Apply Lemma \ref{Linearity} to $g_1$, we have
$g_1 \sim g_2:= x^p + \mathfrak{u}^{p-\lambda} a_{\lambda} x^{\lambda} + \mathfrak{u}^p a_0$. 

First, we show the case $\lambda \not= p-1$ or $u'_{\lambda} \not\equiv -1$ (mod $p$). 
In case $\lambda \not= p-1$, we note that $a_0' \equiv p$ (mod $p^2$), so that we have
\[ v_p(g_2,f_1) \geq 2 > 1 + \frac{\lambda}{p-1} = u_f. \]
Hence we have $g_2 \sim f_1$ by Proposition \ref{Krasner}, and an equivalence $f \sim f_1$ follows it. 
In case $\lambda=p-1$ and $u'_{\lambda} \not\equiv -1$ (mod $p$), we note that
$\sqrt[p-1]{\lambda u'_{\lambda}} \not\in \mathbb{F}_p$. 
By Proposition \ref{Krasner} and \ref{EquivalenceAtBreak} below (as $m=1$, $\omega=\lambda u_{\lambda}'$, $\pi_K=p$),  
we have $g_2 \sim g_3:=x^p + a_{\lambda}' x^{\lambda} + p$. 
By the inequality $v_p(g_3,f_1) > 2$ and Proposition \ref{Krasner}, 
we have $g_3 \sim f_1$, so that$f \sim f_1$. 

Second, we prove the case $\lambda = p-1$ and $u_{\lambda}' \equiv -1$ (mod $p$). 
By the inequality
\[ v_p(g_3,f_2) \geq 2 + (p-1)/p > 2 = u_f \]
and Proposition \ref{Krasner}, we have $g_3 \sim f_2$. 
Hence we obtain an equivalence $f \sim f_2$. 

\noindent
(ii) Suppose that $f$ is of type $\typezero$ and $v_p(a_1) \not= 2$. 
According to Corollary \ref{Cor:Type0TypeLambda} (ii), 
we have $f \sim g_4:=X^p + a_0$. 
Apply Lemma \ref{Linearity} to $f_0$ with similar argument as 
in the proof of (i), 
then we have $g_4 \sim g_5:=X^p + a'_0$. 
Note that $v_p(g_5,f_3) \geq 3 > u_f$, 
so that Proposition \ref{Krasner} gives $g_5 \sim f_3$. 
Thus we deduce the desired equivalence $f \sim f_3$. 

Second, we suppose that $f$ is of type $\typezero$ and $v_p(a_1)=2$. 
By Proposition \ref{Type0Hard}, we have 
$f \sim g_6:=X^p + a''_0$. 
Note that $v_p(g_6,f_4) \geq 3 > u_f$, thus Proposition \ref{Krasner} 
shows $g_6 \sim f_4$. 
Therefore, we have $f \sim f_4$. 
\end{proof}
\begin{proposition}[\cite{Amano71}, Prop.\ 5]\label{EquivalenceAtBreak}
Consider two polynomials 
\[ f = x^p + s p^m x^{p-1} + t p\ \mathrm{and}\    
g = x^p + s p^m x^{p-1} + t' p \quad (s,t,t' \in \mathbb{Z}_p^{\times}) \]
in $E_{\mathbb{Q}_p}^p$. 
Suppose $t \equiv t' \equiv 1$ $\mathrm{(mod}$ $p\mathrm{)}$. 
If $v_p(t - t') = u_f-1$ and $s \not\equiv -1$ $\mathrm{(mod}$ $p\mathrm{)}$, then we have $f \sim g$. 
\end{proposition}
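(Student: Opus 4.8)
The plan is to use the fact that the hypotheses force us to sit exactly at the ramification break, where Proposition~\ref{Krasner} just fails, and to repair this by passing to a cleverly chosen uniformizer.

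First I would normalize the setup. Since $t,t'\in\mathbb{Z}_p^{\times}$, the number $v_p(t-t')$ is a non-negative integer, whereas $u_f-1$ equals $1$ when $m=1$ (so that $f$ is of type $\typelambda$ with $\lambda=p-1$ and $u_f=2$) and equals $1+1/(p-1)$ when $m\ge 2$ (so that $f$ is of type $\typezero$). As $1+1/(p-1)\notin\mathbb{Z}$ for odd $p$, the assumption $v_p(t-t')=u_f-1$ can only hold when $m=1$, $u_f=2$ and $v_p(t-t')=1$. In that case $v_p(f,g)=v_p((t-t')p)=1+v_p(t-t')=2=u_f$, so Proposition~\ref{Krasner} narrowly misses; this is the crux of the difficulty.

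To get around it, I would fix a root $\pi$ of $f$ and, for $c\in\mathbb{Z}_p$, consider the uniformizer $\pi'=\pi+c\pi^2=\pi(1+c\pi)$ of $L_f$. By Lemma~\ref{Lem:Type0}(i) we have $\mathbb{Q}_p(\pi')=\mathbb{Q}_p(\pi)=L_f$, so the minimal polynomial $g_1$ of $\pi'$ satisfies $g_1\sim f$; the goal is to choose $c$ making $g_1$ close enough to $g$ that Proposition~\ref{Krasner} applies to the pair $(g_1,g)$. I would read off the constant term of $g_1$ from Lemma~\ref{Lem:Type0}(ii): setting $x=0$ in $g_1(cx^2+x)=-c^pf(x)f(-x-c^{-1})$ and using $f(0)=tp$ gives, after a short computation, $g_1(0)=tp\bigl(1-p(sc+tc^p)\bigr)$ (equivalently $-N_{L_f/\mathbb{Q}_p}(\pi')$, computed from $e_1(\pi)=-sp$ and $e_2(\pi)=\dots=e_{p-1}(\pi)=0$). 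The key step is then to solve $g_1(0)=t'p$ \emph{exactly}, that is $sc+tc^p=w$ where $w:=(t-t')/(tp)\in\mathbb{Z}_p^{\times}$ is a unit precisely because $v_p(t-t')=1$. Reducing modulo $p$ and using $c^p\equiv c$, this reads $c(s+t)\equiv w\pmod p$. Here the hypotheses $t\equiv 1$ and $s\not\equiv -1\pmod p$ are indispensable: they force $s+t\equiv s+1\not\equiv 0\pmod p$, so $c\equiv w/(s+1)\pmod p$ is a nonzero residue, and since the derivative $s+ptc^{p-1}$ of $sc+tc^p-w$ is a unit, Hensel's lemma lifts it to an honest $c\in\mathbb{Z}_p^{\times}$. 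I expect this solvability to be the main obstacle: if $s\equiv-1\pmod p$ the map $c\mapsto sc+tc^p$ is $\equiv 0\pmod p$ and cannot reach the unit $w$, which is exactly the cyclic case (the second family of Table~\ref{TableDegreeP}) in which changing $t$ really does change the extension.

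Finally I would compare $g_1$ and $g$ coefficientwise. By construction their constant terms agree exactly. The $x^{p-1}$-coefficient of $g_1$ is $-\mathrm{Tr}(\pi')=-\mathrm{Tr}(\pi)-c\,\mathrm{Tr}(\pi^2)=sp-cs^2p^2$, using $\mathrm{Tr}(\pi)=-sp$ and $\mathrm{Tr}(\pi^2)=e_1^2-2e_2=s^2p^2\equiv 0\pmod{p^2}$, hence it is congruent to $sp$ modulo $p^2$. Every intermediate coefficient of $g_1$ is divisible by $p^2$: since $g_1\sim f$ it shares the type $\typelambda$ with $\lambda=p-1$, and for that type each coefficient of index $1\le i\le p-2$ has $v_p\ge 2$. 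Assembling these estimates gives $v_p(g_1,g)>2=u_{g_1}$, so Proposition~\ref{Krasner} yields $g_1\sim g$, and therefore $f\sim g_1\sim g$, as claimed.
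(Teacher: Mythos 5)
Your proof is correct, and the comparison here is necessarily with \cite{Amano71} rather than with the text: the paper offers no argument for Proposition \ref{EquivalenceAtBreak} at all, it simply quotes Amano's Prop.\ 5. Your argument is therefore a genuine addition, and it stays entirely inside the paper's own toolkit. The key steps all check out: the reduction to $m=1$ is valid (for $m\ge 2$ the polynomial is of type $\typezero$, so $u_f-1=1+1/(p-1)$ is not an integer and the hypothesis $v_p(t-t')=u_f-1$ is vacuous); the identity $g_1(0)=tp\bigl(1-p(sc+tc^p)\bigr)$ follows correctly from Lemma \ref{Lem:Type0}(ii), and with $sc+tc^p=w:=(t-t')/(tp)$ one gets $g_1(0)=tp-p(t-t')=t'p$ exactly; the Hensel step is precisely where both hypotheses enter ($t\equiv 1$ and $s\not\equiv -1$ make $s+t$ a unit mod $p$, while $v_p(t-t')=1$ makes $w$ a unit, and the resulting $c$ is a unit so Lemma \ref{Lem:Type0} applies); and the closing estimate $v_p(g_1,g)\ge 2+\frac{1}{p}>2=u_{g_1}$ correctly combines the exact agreement of constant terms, $\mathrm{Tr}(\pi^2)=s^2p^2\equiv 0 \pmod{p^2}$ for the $x^{p-1}$-coefficient, and the paper's remark that type (hence the bound $v_p(b_i)\ge 2$ for $1\le i\le p-2$) is an invariant of the equivalence class. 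What your route buys: the change-of-uniformizer trick $\pi\mapsto\pi(1+c\pi)$ is exactly the device the paper already uses in Proposition \ref{Type0Hard}, but here $c$ is chosen by Hensel's lemma to hit the target constant term on the nose rather than merely to clean up a coefficient; this makes the note self-contained (Table \ref{TableDegreeP} aside, nothing external remains in the proof of Theorem \ref{MainTheorem}), and it explains structurally why $s\equiv -1\pmod p$ must be excluded --- there the map $c\mapsto sc+tc^p$ collapses mod $p$ and cannot reach the unit $w$, which is precisely the cyclic family of Table \ref{TableDegreeP}, where changing the constant term genuinely changes the extension.
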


\subsection{Appendix: An algorithm computing the ramification breaks}

Let $K$ be a finite extension of $\mathbb{Q}_p$ and $f$ an Eisenstein polynomial over $K$. 
Put $H=\mathrm{Hom}_K(L_f,\overline{K})$. 
Let $H_i$, $H^u$ be the lower and upper numbering ramification sets of $H$ in the sense of \cite{Deligne84}, Appendice. 
We give an algorithm for computing the breaks of $H_i$ and $H^u$. 
If $L_f/K$ is a Galois extension, then the ramification sets coincides with the ramification groups in the sense of \cite{Serre79}. 
Let $i_1,i_2,\dots,i_m$ (resp.\ $u_1,u_2,\dots,u_m$) be the lower (resp.\ upper) numbering ramification breaks. 
\begin{algorithm}
Input: $K$, $f$

\noindent
Output: $\{ (i_1,\# H_{i_1}), (i_2,\# H_{i_2}),\dots,(i_m,\# H_{i_m}) \}$ 

\noindent
$\bullet$ Let $N$ be the Newton polygon of $f(x + \pi_f) \in L_f[x]$. 

\noindent
$\bullet$ Let $s_1 > s_2 > \cdots > s_m$ be the slopes of $N$. 

\noindent
$\bullet$ Let $1 < x_1 < x_2 < \cdots < x_m$ be the $x$-coordinates of the vertexes of $N$. 

\noindent
$\bullet$ Return $\{ (-s_1 - 1,x_1), (-s_2 - 1,x_2),\dots,(-s_m - 1,x_m) \}$. 
\end{algorithm}
\begin{remark}
The Newton polygon can be computed by Magma as the inner function 
\texttt{NewtonPolygon}($h(x)$). 
However, in fact, the valuations of the coefficients of $f(x + \pi_f) = \sum_{i=1}^e b_i x^i$ 
can be directly written as 
\[ v_{L_f}(b_i) = \min_{i\leq j\leq e}\{ v_{L_f}(a_j) + v_{L_f}(\binom{j}{i})+ (j-i) \}, \]
where we write $f = \sum_{i=0}^e a_i x^i$. 
\end{remark}
\begin{algorithm}
Input: $\{ (i_1,\# H_{i_1}), (i_2,\# H_{i_2}),\dots,(i_m,\# H_{i_m}) \}$ 

\noindent
Output: $\{ u_1, u_2,\dots,u_m \}$ 

\noindent
$\bullet$ Put $u_1=i_1$.

\noindent
$\bullet$ If $m = 1$ then:

\noindent
\quad $\bullet$ Return $\{ u_1 \}$.

\noindent
$\bullet$ If $m \geq 2$ then:

\noindent
\quad $\bullet$
$S \leftarrow \{ u_1 \}$.

\noindent
\quad $\bullet$ $s \leftarrow 2$.

\noindent
\quad $\bullet \ $ While $s \leq m$:

\noindent
\quad \quad $\bullet$ $u_s=(i_s-i_{s-1})\# H_{i_s}/\# H_{i_1}+u_{s-1}$.

\noindent
\quad \quad $\bullet$ $s \leftarrow s+1$. 

\noindent
\quad \quad $\bullet$ $S \leftarrow S \cup \{ u_s \}$.

\noindent
\quad $\bullet$ Return $S$.
\end{algorithm}


\end{document}